\newtheorem{prob}{Problem}
\newtheorem{ques}{Question}
\newtheorem{thm}{Theorem}[section]
\newtheorem{lem}[thm]{Lemma}
\newtheorem{cor}[thm]{Corollary}
\theoremstyle{definition}
\newtheorem{defn}{Definition}
\def\PA{\mathsf{PA}}
\def\TA{\mathsf{TA}}
\def\cee{\mathsf{C}}
\DeclareMathOperator{\Lt}{Lt}
\DeclareMathOperator{\scl}{Scl}
\DeclareMathOperator{\Def}{Def}
\begin{document}

\title{Enayat Models of Peano Arithmetic}
\author{Athar Abdul-Quader}
\date{September 21, 2017}

\begin{abstract}
Simpson \cite{simpson} showed that every countable model $\mathcal{M} \models \PA$ has an expansion $(\mathcal{M}, X) \models \PA^*$ that is pointwise definable. A natural question is whether, in general, one can obtain expansions of a non-prime model in which the definable elements coincide with those of the underlying model. Enayat \cite{enayat1988undefinable} showed that this is impossible by proving that there is $\mathcal{M} \models \PA$ such that for each undefinable class $X$ of $\mathcal{M}$, the expansion $(\mathcal{M}, X)$ is pointwise definable. We call models with this property Enayat models. In this paper, we study Enayat models and show that a model of $\PA$ is Enayat if it is countable, has no proper cofinal submodels and is a conservative extension of all of its elementary cuts. We then show that, for any countable linear order $\gamma$, if there is a model $\mathcal{M}$ such that $\Lt(\mathcal{M}) \cong \gamma$, then there is an Enayat model $\mathcal{M}$ such that $\Lt(\mathcal{M}) \cong \gamma$.
\end{abstract}

\maketitle

\section{Introduction}

Given a model $\mathcal{M}$ of $\PA$, a subset $X \subseteq M$ is called \emph{inductive} if $(\mathcal{M}, X) \models \PA^*$. In other words, $X$ is inductive if the structure $(\mathcal{M}, X)$ satisfies the induction schema for all formulas in the expanded language with a predicate symbol for $X$. A set $X \subseteq M$ is called a \emph{class} if, for each $a \in M$, $\{ x \in X : x < a \} \in \Def(\mathcal{M})$; that is, $X$ is a class if every initial segment of $X$ is definable with parameters in $\mathcal{M}$. Every inductive subset of a model of $\PA$ is a class. Simpson \cite{simpson} showed that every countable model $\mathcal{M} \models \PA$ has a pointwise definable expansion $(\mathcal{M}, X) \models \PA^*$, where $X \subseteq M$. Simpson's argument uses arithmetic forcing, which produces an undefinable, inductive set $X \subseteq M$. One may ask whether arithmetic forcing can be used to find an undefinable, inductive set $X \subseteq M$ so that no new elements are definable in $(\mathcal{M}, X)$. Enayat \cite{enayat1988undefinable} showed that this is impossible: for every completion $T$ of $\PA$, there are $2^{\aleph_0}$ non-isomorphic models $\mathcal{M} \models T$ with the property that for any undefinable class $X \subseteq M$, the expansion $(\mathcal{M}, X)$ is pointwise definable. Enayat's result inspires the following definition:

\begin{defn}
Let $\mathcal{M} \models \PA$ be countable. If $\mathcal{M}$ is not prime and, for every undefinable class $X$ of $M$, $(\mathcal{M}, X)$ is pointwise definable, then $\mathcal{M}$ is called an \emph{Enayat model}.
\end{defn}

If $\mathcal{M} \prec \mathcal{N}$, we say that $\mathcal{N}$ is a \emph{minimal} extension of $\mathcal{M}$ if whenever $\mathcal{M} \preccurlyeq \mathcal{K} \preccurlyeq \mathcal{N}$, then either $\mathcal{K} = \mathcal{M}$ or $\mathcal{K} = \mathcal{N}$. Given a model $\mathcal{M} \models \PA$ and a set $X \subseteq M$, the Skolem closure of $X$, denoted $\scl^\mathcal{M}(X)$ is the smallest elementary submodel of $\mathcal{M}$ containing $X$. We often suppress the reference to the larger model $\mathcal{M}$ and write $\scl(X)$. An elementary extension $\mathcal{M} \prec \mathcal{N}$ is called superminimal if, for all $a \in N \setminus M$, $\mathcal{N} = \scl(a)$; it is clear that superminimal extensions are also minimal extensions. An extension $\mathcal{M} \prec \mathcal{N}$ is conservative, denoted $\mathcal{M} \prec_\text{cons} \mathcal{N}$ if, for all $X \in \Def(\mathcal{N})$, $X \cap M \in \Def(\mathcal{M})$. Enayat \cite{enayat1988undefinable} showed that, for each completion $T$ of $\PA$, any minimal conservative extension of the prime model of $T$ is Enayat. By a similar proof if $\alpha$ is a countable ordinal, then the union of an elementary chain of superminimal conservative extensions of length $\alpha$ is Enayat. Such models exist because every countable model of $\PA$ has a superminimal conservative extension (\cite[Corollary 2.2.12]{tsmopa}).

The work in this paper is based in large part on the discussion of substructure lattices of models of $\PA$ given in \cite[Chapter 4]{tsmopa}. We will repeat some definitions and results here.

Given $\mathcal{M} \models \PA$, the set of all $\mathcal{K} \prec \mathcal{M}$ forms a lattice under inclusion, called the \emph{substructure lattice} of $\mathcal{M}$ and denoted $\Lt(\mathcal{M})$. Given $\mathcal{M} \prec \mathcal{N}$, the \textit{interstructure lattice}, denoted $\Lt(\mathcal{N} / \mathcal{M})$ is the set of all $\mathcal{K}$ such that $\mathcal{M} \preceq \mathcal{K} \preceq \mathcal{N}$. Given a lattice $L$, $a \in L$ is \textit{compact} if whenever $X \subseteq L$ and $a \leq \bigvee X$, then there is a finite $Y \subseteq X$ such that $a \leq \bigvee Y$. $L$ is \textit{algebraic} if it is complete and each $a \in L$ is a supremum of a set of compact elements. If $\kappa$ is a cardinal, then $L$ is $\kappa$-algebraic if it is algebraic and each $a \in L$ has less than $\kappa$ compact predecessors. If $\mathcal{M} \models \PA$, then $\Lt(\mathcal{M})$ is $\aleph_1$-algebraic.

Section \ref{main} of this paper characterizes which finite lattices can be realized as the substructure lattice of an Enayat model. Section \ref{infdesc} contains the first main result of this paper, Theorem \ref{cuts}, which states that a countable model of $\PA$ that is a conservative extension of all its submodels, and contains no proper cofinal submodel is Enayat. Section \ref{mainsect} contains the second main result, Theorem \ref{mainthm}, which shows that any countable linear order that can be the substructure lattice of a model of $\PA$ can be the substructure lattice of an Enayat model. We conclude with some open problems in Section \ref{opens}.

This paper grew out of work that appeared as a chapter in the author's Ph.D. thesis.\endnote{The author is grateful to his advisor Roman Kossak, and supervisory committee members Alfred Dolich, Russell Miller, and Philipp Rothmaler for their many helpful revisions.}\endnote{The author is grateful to Jim Schmerl for his comments which led to significant improvements throughout this paper, in particular in Theorems \ref{cuts} and \ref{mainthm}, where his comments helped simplify the proofs significantly.}

\section{Enayat Models With Finite Substructure Lattices}\label{main}

The ultimate goal of this project is to give a complete characterization of Enayat models in terms of better-known model-theoretic properties. So far, we can identify a few such properties. First we show that Enayat models cannot have proper cofinal submodels.

\begin{lem}\label{cofinality}
Let $\mathcal{M} \models \PA$ be countable and suppose $\mathcal{K} \prec_\text{cof} \mathcal{M}$ is a proper submodel. Then $\mathcal{M}$ is not Enayat.
\end{lem}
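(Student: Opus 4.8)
The plan is to exhibit an undefinable class $X$ of $\mathcal{M}$ such that $(\mathcal{M}, X)$ is \emph{not} pointwise definable, thereby witnessing that $\mathcal{M}$ fails the defining property of an Enayat model. The natural candidate is $X = K$ itself, the universe of the proper cofinal submodel $\mathcal{K}$. First I would check that $K$ is a class: since $\mathcal{K} \prec_\text{cof} \mathcal{M}$, for each $a \in M$ there is $b \in K$ with $a < b$, and then $\{x \in K : x < a\}$ is an initial segment of $K$; using that $\mathcal{K} \prec \mathcal{M}$ and that $\mathcal{K}$ is cofinal, one shows this set is coded in $\mathcal{M}$ (in a cofinal extension every proper cut, hence in particular every bounded piece of $K$, is coded). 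Next, $K$ is undefinable: a proper elementary submodel is never a definable subset, since a definable proper cut would contradict induction and a definable cofinal-but-bounded-below subset... more carefully, if $K$ were definable then $\mathcal{M}$ would define its own proper elementary substructure, which is impossible by overspill/Tarski-style arguments (a definable $K \ni 0$ closed under successor would be all of $M$).

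The crux is then to show $(\mathcal{M}, K)$ is not pointwise definable. Here I would argue that $\mathcal{K}$, as a subset of the expanded model, "absorbs" all definable elements: every element of $M$ that is definable in $(\mathcal{M}, K)$ already lies in $K$. Indeed, cofinality gives us that any element definable from the new predicate can be bounded by an element of $K$; combined with the fact that $\mathcal{K} \prec \mathcal{M}$, any $(\mathcal{M},K)$-definable element is definable in $\mathcal{M}$ with parameters from $K$ and a bound in $K$, so it belongs to $\mathcal{K}$. Since $\mathcal{K}$ is a \emph{proper} submodel, there is $c \in M \setminus K$, and this $c$ is not definable in $(\mathcal{M}, K)$. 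Hence $(\mathcal{M}, K)$ is not pointwise definable, so $\mathcal{M}$ is not Enayat.

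The main obstacle I anticipate is the precise verification that $K$ is a class, i.e., that every bounded initial segment of $K$ is coded in $\mathcal{M}$; this is exactly where cofinality is essential, and it may require an explicit use of the fact that in a cofinal elementary extension every cut of the submodel is coded (a standard fact about cofinal extensions, e.g. from \cite[Chapter 2]{tsmopa}). A secondary point requiring care is the claim that definability in $(\mathcal{M}, K)$ cannot escape $K$: I would make this rigorous by noting that an element $d$ definable in $(\mathcal{M},K)$ satisfies a formula $\varphi(x, K)$, and by cofinality $d < b$ for some $b \in K$; then $\{x < b : (\mathcal{M},K) \models \varphi(x,K)\}$ is coded by some $e \in M$, and since $\mathcal{K} \prec_\text{cof} \mathcal{M}$ one can push $e$ (hence $d$) into $\mathcal{K}$. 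If a completely clean version of this last step proves awkward, an alternative is to invoke that a countable Enayat model has no proper cofinal submodel by first establishing that Enayat models are "tall" in the sense that $\Def(\mathcal{M})$ is cofinal with the right closure properties, but the direct argument above should suffice.
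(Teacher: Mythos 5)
There is a fatal gap at the very first step: $K$ is \emph{not} a class of $\mathcal{M}$, so it is not an admissible witness against Enayatness (the definition only quantifies over undefinable \emph{classes}). Indeed, suppose some bounded piece $\{x \in K : x < b\}$ were definable in $\mathcal{M}$, say by $\phi(x,e)$, where $b \in K$ is chosen above some $c \in M \setminus K$ (such $b$ and $c$ exist because the extension is proper and cofinal). Then $D = \{x < b : \lnot \phi(x,e)\}$ is a nonempty definable subset of $\mathcal{M}$ (it contains $c$), so it has a least element $d \leq c$. Every $x < d$ satisfies $\phi(x,e)$ and hence lies in $K$, while $d \notin K$; since $0 \in K$ we have $d > 0$ and $d-1 \in K$, whence $d = (d-1)+1 \in K$ because $K$ is closed under successor --- a contradiction. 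So no bounded piece of $K$ straddling a gap is coded in $\mathcal{M}$, and your anticipated ``standard fact'' is the reverse of the true one: Gaifman's theorem says cofinal extensions are conservative, i.e., every $\mathcal{M}$-definable set traces to a $\mathcal{K}$-definable subset of $K$; it does not say that $K$ or its bounded pieces are coded in $\mathcal{M}$, and the computation above shows they never are. (Your third step is also unjustified --- there is no reason an element definable in $(\mathcal{M},K)$ should be $\mathcal{M}$-definable from parameters in $K$, since the predicate genuinely adds expressive power --- but the argument already fails before reaching it.)

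The paper's proof goes in a different direction: since $\mathcal{K}$ is countable, arithmetic forcing yields an undefinable \emph{inductive} $X \subseteq K$; by the Kotlarski--Schmerl theorem on cofinal extensions of models of $\PA^*$, the class $X$ extends to $Y \subseteq M$ with $(\mathcal{K},X) \prec (\mathcal{M},Y)$. Then $Y$ is an undefinable class of $\mathcal{M}$, and elementarity of the expanded structures forces every parameter-free definable element of $(\mathcal{M},Y)$ into $K$, so $(\mathcal{M},Y)$ is not pointwise definable. This is the correct substitute for your idea of ``using $\mathcal{K}$ itself'': one cannot name $K$ by an inductive (or even class) predicate, but one can transfer an undefinable class of $\mathcal{K}$ upward and let elementarity trap the definable elements inside $K$.
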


\begin{proof}
Because $\mathcal{K}$ is countable, we can find an undefinable inductive subset $X$ of $\mathcal{K}$ by arithmetic forcing. We can extend this $X$ to $Y \subseteq M$ as follows: for each $a \in K$ we there is some formula $\phi_a(x)$ (possibly using parameters from $K$) which defines 
\begin{displaymath}\{ x \in K : (\mathcal{K}, X) \models x \leq a \wedge x \in X \} 
\end{displaymath}

Then let $Y = \bigcup\limits_{a \in K} \{ x \in M : \mathcal{M} \models \phi_a(x) \}$, and one can show that $(\mathcal{K}, X) \prec (\mathcal{M}, Y)$. This result is due independently to Kotlarski and Schmerl; see \cite[Theorem 1.3.7]{tsmopa}. Since $\scl^{(\mathcal{M}, Y)}(0) \subseteq \mathcal{K}$, $\mathcal{M}$ is not Enayat.
\end{proof}

Lemma \ref{cofinality} gives us an easy characterization of which finite lattices can appear as the substructure lattices of an Enayat model. To state this characterization, we recall the ``lattice sum'' notation. Given two lattices $L_1$ and $L_2$, if $L_1$ has a top element and $L_2$ has a bottom element, then the lattice $L = L_1 \oplus L_2$ is the lattice formed by identifying the top element of $L_1$ with the bottom element of $L_2$. In particular, for any lattice $L$, $L \oplus \mathbf{2}$ is the lattice formed by adding one new element above the top element of $L$. As an example, if $\mathcal{N}$ is a superminimal elementary extension of $\mathcal{M}$, then $\Lt(\mathcal{N}) \cong \Lt(\mathcal{M}) \oplus \mathbf{2}$.

\begin{cor}\label{enayatlattices}
\ \begin{enumerate}
\item Let $\mathcal{M} \models \PA$ be an Enayat model. If $\Lt(\mathcal{M})$ is finite, then it is of the form $L \oplus \bf{2}$ where $L$ is some finite lattice.\label{enayatlplus2}
\item Let $L$ be a finite lattice, $T$ a completion of $\PA$ and $T \neq \TA$. If there is $\mathcal{N} \models T$ such that $\Lt(\mathcal{N}) \cong L$, then there is an Enayat $\mathcal{M} \models T$ such that $\Lt(\mathcal{M}) \cong L \oplus \bf{2}$.\label{lplus2enayat}
\end{enumerate}
\end{cor}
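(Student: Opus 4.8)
The plan is to show that the finite lattice $\Lt(\mathcal{M})$ has a unique coatom; a finite lattice with a unique coatom $m$ is exactly one of the form $[\bot,m]\oplus\mathbf{2}$. Since an Enayat model is not prime, $\Lt(\mathcal{M})\neq\mathbf{1}$, so a coatom exists; suppose toward a contradiction that $\mathcal{K}_1\neq\mathcal{K}_2$ are two coatoms, i.e.\ two distinct maximal proper elementary submodels. By Lemma \ref{cofinality}, $\mathcal{M}$ has no proper cofinal submodel, so neither $\mathcal{K}_i$ is cofinal in $\mathcal{M}$. By the Gaifman factorization, $\mathcal{K}_i$ sits inside some $\mathcal{K}_i'\preceq\mathcal{M}$ with $\mathcal{K}_i$ cofinal in $\mathcal{K}_i'$ and $\mathcal{K}_i'$ an elementary initial segment of $\mathcal{M}$; maximality of $\mathcal{K}_i$ forces $\mathcal{K}_i'=\mathcal{K}_i$ or $\mathcal{K}_i'=\mathcal{M}$, and the latter would make $\mathcal{K}_i$ properly cofinal in $\mathcal{M}$, against Lemma \ref{cofinality}. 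Hence both $\mathcal{K}_1,\mathcal{K}_2$ are elementary cuts of $\mathcal{M}$. But cuts are linearly ordered by inclusion, so $\mathcal{K}_1$ and $\mathcal{K}_2$ are comparable, contradicting that they are distinct coatoms. Thus there is a unique coatom $\mathcal{M}'$; in the finite lattice $\Lt(\mathcal{M})$ every element other than $\mathcal{M}$ lies below $\mathcal{M}'$, and since $[\bot,\mathcal{M}']$ (computed in $\Lt(\mathcal{M})$) equals $\Lt(\mathcal{M}')$ we get $\Lt(\mathcal{M})\cong\Lt(\mathcal{M}')\oplus\mathbf{2}$.

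\textbf{Part (2).} I would first make $\mathcal{N}$ convenient. If $\Lt(\mathcal{N})\cong L$ is finite then $\mathcal{N}$ is finitely generated over its prime submodel $\mathcal{N}_0=\scl^{\mathcal{N}}(\emptyset)$: the submodels $\scl^{\mathcal{N}}(\mathcal{N}_0\cup F)$ for finite $F\subseteq N$ form an upward-directed subfamily of the finite lattice $\Lt(\mathcal{N})$, hence have a greatest element, which can only be $\mathcal{N}$ itself; coding a finite generating set by a single element gives $\mathcal{N}=\scl^{\mathcal{N}}(a)$, so $\mathcal{N}$ is countable. The substantive step is to upgrade this: using the machinery for realizing finite lattices as substructure lattices (\cite[Chapter 4]{tsmopa}; this is where $T\neq\TA$ enters), I would produce a countable $\mathcal{N}\models T$ with $\Lt(\mathcal{N})\cong L$ that is, in addition, a conservative extension of \emph{every} one of its elementary submodels. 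I expect establishing this to be the main obstacle; once it is in hand the rest is soft. (When $L=\mathbf{1}$ this $\mathcal{N}$ is just the prime model of $T$, and the argument below recovers Enayat's theorem.)

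Now let $\mathcal{M}$ be a superminimal conservative extension of such an $\mathcal{N}$ (these exist by \cite[Corollary 2.2.12]{tsmopa}). Then $\Lt(\mathcal{M})\cong\Lt(\mathcal{N})\oplus\mathbf{2}\cong L\oplus\mathbf{2}$, $\mathcal{M}\models T$, $\mathcal{M}=\scl^{\mathcal{M}}(c)$ for any $c\in M\setminus N$ is countable, and $\mathcal{M}$ is not prime. To see $\mathcal{M}$ is Enayat, fix an undefinable class $X$ of $\mathcal{M}$ and let $\mathcal{K}$ be the $\lpa$-reduct of $\scl^{(\mathcal{M},X)}(\emptyset)$, so $\scl^{(\mathcal{M},X)}(\emptyset)=(\mathcal{K},X\cap K)$. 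Conservative extensions are end extensions, so fix $b\in M\setminus N$; then $b$ is above every element of $N$. If $\mathcal{K}\not\subseteq N$, pick $c\in K\setminus N$: superminimality gives $\scl^{\mathcal{M}}(c)=\mathcal{M}$, so $\mathcal{K}=\mathcal{M}$ and $(\mathcal{M},X)$ is pointwise definable, as wanted. If instead $\mathcal{K}\subseteq N$, then $\mathcal{K}\preceq\mathcal{N}$, so $\mathcal{K}\prec_\text{cons}\mathcal{N}$ by the choice of $\mathcal{N}$, and composing with $\mathcal{N}\prec_\text{cons}\mathcal{M}$ gives $\mathcal{K}\prec_\text{cons}\mathcal{M}$. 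Since $X$ is a class, $X\cap\{x:x<b\}\in\Def(\mathcal{M})$; as $K\subseteq\{x:x<b\}$, intersecting with $K$ and using $\mathcal{K}\prec_\text{cons}\mathcal{M}$ gives $X\cap K\in\Def(\mathcal{K})$, say $X\cap K=\phi(\cdot,\bar{p})^{\mathcal{K}}$ with $\bar{p}\in K$. Then the sentence asserting that the predicate equals $\phi(\cdot,\bar{p})$ holds in $(\mathcal{K},X\cap K)$, hence, since $(\mathcal{K},X\cap K)\prec(\mathcal{M},X)$, also in $(\mathcal{M},X)$, so $X=\phi(\cdot,\bar{p})^{\mathcal{M}}\in\Def(\mathcal{M})$, contradicting that $X$ is undefinable. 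So the first case always occurs and $\mathcal{M}$ is Enayat. (This verification is a special case of the argument later used for Theorem \ref{cuts}.)
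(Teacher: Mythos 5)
Part (1) is correct and essentially identical to the paper's argument: Lemma \ref{cofinality} rules out a cofinal coatom, Gaifman splitting plus maximality forces every coatom to be an elementary cut, and comparability of cuts yields a unique coatom, whence $\Lt(\mathcal{M}) \cong L \oplus \mathbf{2}$.

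Part (2) has a genuine gap, and the intermediate target you set yourself is in fact unattainable. You propose to produce a countable $\mathcal{N} \models T$ with $\Lt(\mathcal{N}) \cong L$ that is a conservative extension of \emph{every} one of its elementary submodels, and you flag this as the main obstacle without proving it. For most finite $L$ no such $\mathcal{N}$ exists: a proper conservative extension is an end extension, so $\mathcal{N}$ cannot be conservative over a proper cofinal submodel; but already for $L = \mathbf{B}_2$, writing $\mathcal{N} = \scl(a,b)$ with $\scl(a), \scl(b)$ incomparable, Gaifman splitting and the shape of $\mathbf{B}_2$ force each of $\scl(a), \scl(b)$ to be either a cut of $\mathcal{N}$ or cofinal in $\mathcal{N}$, and they cannot both be cuts since cuts are linearly ordered — so $\mathcal{N}$ has a proper cofinal submodel over which it is not conservative. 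The paper sidesteps this entirely: it takes $\mathcal{K}$ to be a \emph{cofinal} extension of the prime model $\mathcal{M}_T$ with $\Lt(\mathcal{K}) \cong L$ (Theorems 4.5.21 and 4.5.22 of \cite{tsmopa}; this is where $T \neq \TA$ enters), and lets $\mathcal{M}$ be a superminimal conservative extension of $\mathcal{K}$. Every proper elementary submodel of $\mathcal{M}$ then contains $\mathcal{M}_T$, which is cofinal in $\mathcal{K}$, so the only proper elementary cut of $\mathcal{M}$ is $\mathcal{K}$ itself, over which $\mathcal{M}$ is conservative by construction; conservativity over a single, automatically conservative cut is all that is ever needed, and Theorem 2.2.13 of \cite{tsmopa} (equivalently, Theorem \ref{cuts}) finishes the proof.

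A second, smaller defect is your verification of Enayat-ness in the case $\mathcal{K} \subseteq N$: you invoke $(\mathcal{K}, X \cap K) \prec (\mathcal{M}, X)$ where $\mathcal{K}$ is the definable hull of $\emptyset$ in $(\mathcal{M}, X)$. A class $X$ need not be inductive, so $(\mathcal{M}, X)$ need not have definable Skolem functions, and its definable hull need not be an elementary substructure in the expanded language (it is only an $\lpa$-elementary submodel of $\mathcal{M}$). The paper's Lemma \ref{limit} replaces this step with a direct argument: given $\phi(\cdot, b)$ defining $X \cap K$ via conservativity and $a$ definable with $b < a$, the set of $z$ such that some $y < a$ satisfies $\phi(x,y) \leftrightarrow x \in X$ for all $x < z$ contains $K$, is bounded (else $X$ would be definable) and is definable (because $X$ is a class), so its maximum is a definable element of $(\mathcal{M}, X)$ lying above $\mathcal{K}$ — contradicting that $\mathcal{K}$ already contains all definable elements. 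You should route the contradiction through that lemma rather than through elementarity of the expanded hull.
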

\begin{proof}
To prove (\ref{enayatlplus2}), all we need to show here is that the top element of $\Lt(\mathcal{M})$ cannot have more than one immediate predecessor. Suppose there are two: $\mathcal{K}_1$ and $\mathcal{K}_2$. Notice that, since these are immediate predecessors of $\mathcal{M}$, the extensions $\mathcal{K}_i \prec \mathcal{M}$ are minimal. By Gaifman's Splitting Theorem (\cite{gaifmansplit}), there is $\bar{\mathcal{K}_i}$ such that $\mathcal{K}_i \preceq_\text{cof} \bar{\mathcal{K}_i} \preceq_\text{end} \mathcal{M}$. By minimality, for each $i$, either $\mathcal{K}_i = \bar{\mathcal{K}_i}$ or $\bar{\mathcal{K}_i} = \mathcal{M}$. So either $\mathcal{K}_i \prec_\text{end} \mathcal{M}$ or $\mathcal{K}_i \prec_\text{cof} \mathcal{M}$. Suppose neither $\mathcal{K}_1$ nor $\mathcal{K}_2$ is cofinal in $\mathcal{M}$, and therefore they are both cuts. Because $\mathcal{K}_1$ and $\mathcal{K}_2$ are incomparable in $\Lt(\mathcal{M})$, there are $a \in K_1 \setminus K_2$ and $b \in K_2 \setminus K_1$. Then either $\mathcal{M} \models a < b$ or $\mathcal{M} \models b < a$. Because the $\mathcal{K}_i$ are cuts, in the former case, that means $a \in K_2$ and in the latter case, $b \in K_1$. These are both contradictions, so one of the $\mathcal{K}_i$ must be a cofinal submodel of $\mathcal{M}$. This is impossible if $\mathcal{M}$ is Enayat by Lemma \ref{cofinality}.

For the proof of (\ref{lplus2enayat}), let $\mathcal{M}_T \models T$ be a prime model of $T$. Since there is $\mathcal{N} \models T$ with $\Lt(\mathcal{N}) \cong L$, then by Theorems 4.5.21 and 4.5.22 in \cite{tsmopa}, there is a cofinal extension $\mathcal{K}$ of $\mathcal{M}_T$ such that $\Lt(\mathcal{K}) \cong L$. Let $\mathcal{M}$ be a superminimal conservative extension of $\mathcal{K}$. Theorem 2.2.13 in \cite{tsmopa} shows that this $\mathcal{M}$ must be Enayat.
\end{proof}

Corollary \ref{enayatlattices} characterizes the finite lattices which can appear as the substructure lattice of an Enayat model. To see this, we note that if $L$ is a finite lattice that is the substructure lattice of a model of $\TA$, then, for any completion $T \neq \TA$ of $\PA$, there is a cofinal extension $\mathcal{M}$ of the prime model $\mathcal{M}_T$ such that $\Lt(\mathcal{M}) \cong L$. To get such an extension, we appeal to Theorems 4.5.21 and 4.5.22 in \cite{tsmopa}.

The following remains open:

\begin{ques}
Which finite lattices can be realized as the substructure lattice of an Enayat model of $\TA$?
\end{ques}

We can modify the proof of Corollary \ref{enayatlattices}(\ref{lplus2enayat}) to get that, for a finite lattice $L$, if there is a model $\mathcal{M} \models \TA$ such that $\Lt(\mathcal{M}) \cong \mathbf{2} \oplus L$, then there is an Enayat model of $\TA$ whose substructure lattice is $\mathbf{2} \oplus L \oplus \mathbf{2}$. This is done in much the same way: first we find a minimal, conservative extension $\mathcal{M}$ of $\mathbb{N}$, and then find a cofinal extension $\mathcal{M}_1$ of $\mathcal{M}$ such that $\Lt(\mathcal{M}_1) \cong \mathbf{2} \oplus L$ and so that the greatest common initial segment between $\mathcal{M}$ and $\mathcal{M}_1$ contains a non-standard element. Then a superminimal conservative extension of $\mathcal{M}_1$ is Enayat.

Other Enayat models of $\TA$ can be found using results in the next section. As an example, there is an Enayat model of $\TA$ whose substructure lattice is isomorphic to $\mathbf{B}_2 \oplus \mathbf{2}$, showing that substructure lattices of models of $\TA$ need not be isomorphic to a lattice of the form $\mathbf{2} \oplus L \oplus \mathbf{2}$ for some finite lattice $L$. To find such a model, let $p(x)$ be a minimal type over $\TA$ and let $a$ and $b$ be two elements realizing it. Then if $\mathcal{M}$ is a superminimal conservative extension of $\scl(a, b)$, it is Enayat and $\Lt(\mathcal{M}) \cong \mathbf{B}_2 \oplus \mathbf{2}$.

Corollary \ref{enayatlattices} implies that there are Enayat models of $\PA$ whose substructure lattice is isomorphic to $\mathbf{N}_5 \oplus \mathbf{2}$. It is unknown whether there is an Enayat model of $\TA$ whose substructure lattice is isomorphic to this lattice; more generally, it is unknown if there are Enayat models which are not conservative over all their elementary cuts. If $\mathcal{M} \models \TA$ is such that $\Lt(\mathcal{M}) \cong \mathbf{N}_5 \oplus \mathbf{2}$, then $\mathcal{M}$ is not a conservative extension of $\mathbb{N}$.

\section{Characterizing Enayat Models}\label{infdesc}

In this section, we show our first main result: a model is Enayat if it has no proper cofinal submodel and is a conservative extension of each of its elementary cuts. First, we prove a lemma which will be needed for this result. This lemma is very similar to \cite[Theorem 2.2.13]{tsmopa}.

\begin{lem}\label{limit}
Suppose $\mathcal{N} \models \PA$, $X$ is an undefinable class of $\mathcal{N}$, $\mathcal{M} \prec_\text{cons} \mathcal{N}$, and $C$ is a cofinal subset of $\mathcal{M}$. Then there is $b \in N \setminus M$ such that $b \in \mathrm{dcl}^{(\mathcal{N}, X)}(C)$.
\end{lem}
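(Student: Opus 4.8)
The plan is to reduce to the case where $\mathcal{M}$ is an initial segment of $\mathcal{N}$, use conservativity to see that $X\cap M$ is then definable in $\mathcal{M}$, and finally extract — from a single element of $C$ — a definition of $X\cap M$ that is correct on all of $M$; the first point of disagreement between that definition and $X$ will be the desired $b$. (The extension is of course proper here, and I use this to fix some $a_0\in N\setminus M$.)

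First I would show that $\mathcal{M}\prec_{\mathrm{end}}\mathcal{N}$. If not, there are $b\in N\setminus M$ and $m\in M$ with $b<m$. Since $[0,b)\in\Def(\mathcal{N})$, conservativity gives $[0,b)\cap M\in\Def(\mathcal{M})$; this is a bounded initial segment of $\mathcal{M}$, hence of the form $[0,d)^{\mathcal{M}}$ for $d=\min\{x\in M:x\geq b\}$, and then $d-1<b<d$, contradicting discreteness. So $\mathcal{M}$ is a cut of $\mathcal{N}$. Now $X\restriction a_0\in\Def(\mathcal{N})$, and since $\mathcal{M}$ is a cut below $a_0$ we have $(X\restriction a_0)\cap M=X\cap M$; by conservativity this set lies in $\Def(\mathcal{M})$, say $X\cap M=\{x\in M:\mathcal{M}\models\psi(x,m_0)\}$ with $\psi$ parameter-free and $m_0\in M$.

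The heart of the argument is then the following. In $(\mathcal{N},X)$ define, for $c\in N$,
\[ g(c)=\text{the least } z<c \text{ with } \mathcal{N}\models\forall x<c\,\big(\psi(x,z)\leftrightarrow x\in X\big), \]
when such $z$ exists. Since $X$ is a class, $X\restriction c$ is coded by an element of $N$, so $g$ is a well-defined $\emptyset$-definable (partial) function of $(\mathcal{N},X)$. Because $\mathcal{M}$ is a cut, for $c\in M$ every candidate $z<c$ lies in $M$ and the bracketed condition is absolute between $(\mathcal{N},X)$ and $\mathcal{M}$ (replacing ``$x\in X$'' by ``$\mathcal{M}\models\psi(x,m_0)$''); hence on $\{c\in M:c>m_0\}$ the function $g$ agrees with the $\mathcal{M}$-definable function $c\mapsto(\text{least } z<c \text{ with } \mathcal{M}\models\forall x<c\,(\psi(x,z)\leftrightarrow\psi(x,m_0)))$, which is nondecreasing and bounded above by $m_0$. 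By induction in $\mathcal{M}$ this function is eventually constant, with value some $z^*\leq m_0$: there is $c_1\in M$ with $g(c)=z^*$ for all $c\in M$, $c\geq c_1$. Choosing $c_2\in C$ with $c_2\geq c_1$ — possible since $C$ is cofinal in $\mathcal{M}$ — we get $z^*=g(c_2)\in\mathrm{dcl}^{(\mathcal{N},X)}(c_2)\subseteq\mathrm{dcl}^{(\mathcal{N},X)}(C)$. By construction $z^*$ satisfies $\forall x<c\,(\psi(x,z^*)\leftrightarrow x\in X)$ for every $c\geq c_1$ in $M$, so, $\mathcal{M}$ being a cut, $\{x\in N:\mathcal{N}\models\psi(x,z^*)\}$ agrees with $X$ on all of $M$. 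Since $X\notin\Def(\mathcal{N})$ whereas $\{x:\psi(x,z^*)\}\in\Def(\mathcal{N})$, these sets are distinct, and their least point of disagreement $b$ lies in $N\setminus M$ and in $\mathrm{dcl}^{(\mathcal{N},X)}(z^*)\subseteq\mathrm{dcl}^{(\mathcal{N},X)}(C)$, completing the proof.

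The main obstacle is the parameter bookkeeping in the last step. Conservativity hands us a definition of $X\cap M$ in $\mathcal{M}$, but a priori only with a parameter $m_0\in M$ that need not lie in $\mathrm{dcl}^{(\mathcal{N},X)}(C)$; the stabilization of $g$ on $M$ — which is just an instance of induction inside $\mathcal{M}$ — is exactly what allows that parameter to be replaced by one computed from a single element of $C$, and this is the only place cofinality of $C$ is used. One should also keep in mind that $(\mathcal{N},X)$ need not satisfy full induction, since $X$ is assumed only to be a class and not inductive; but the class property is enough to guarantee that $g$ is well-defined and definable and that the least disagreement point exists, which is all the argument requires.
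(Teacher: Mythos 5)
Your proof is correct, and while it follows the same broad strategy as the paper's --- use conservativity (together with the fact that conservative extensions are end extensions) to obtain a parametric definition $\psi(x,m_0)$ of $X\cap M$ over $\mathcal{M}$, use cofinality of $C$ to replace the parameter $m_0$ by something definable from a single element of $C$, and then locate the first failure of that definition above $M$ --- the execution of the key step is genuinely different. The paper sets $Y=\{z\in N:(\mathcal{N},X)\models\exists y<a\,\forall x<z\,(\phi(x,y)\leftrightarrow x\in X)\}$ for one $a\in C$ above the parameter and takes $b=\max Y$; the delicate point there is showing $Y\neq N$, which amounts to a stabilization argument for a monotone function carried out \emph{inside} $(\mathcal{N},X)$, where full induction is not available. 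You instead run the minimization $c\mapsto g(c)$ and its stabilization entirely inside $\mathcal{M}$, where $\PA$ holds outright, extract a concrete parameter $z^{\ast}=g(c_2)$ with $c_2\in C$, and only then pass to $\mathcal{N}$ to take the least point of disagreement between $\psi(\cdot,z^{\ast})$ and $X$, which exists because $X$ is a class, as you correctly note. This buys a cleaner justification of exactly the step the paper treats most briskly, at the cost of a somewhat longer argument; your identification of $g$ with the $\mathcal{M}$-definable function $h$ on $M\cap(m_0,\infty)$, using both elementarity and the cut property, is handled correctly and is where the transfer happens. One shared caveat: both your argument and the paper's tacitly assume $\mathcal{M}\neq\mathcal{N}$ (otherwise the conclusion is vacuously false and, as you observe, there is no $a_0$ to fix); this is harmless, since the lemma is only invoked in the proof of Theorem \ref{cuts} for a proper elementary cut.
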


\begin{proof}
Because $\mathcal{N}$ is a conservative extension of $\mathcal{M}$, we have, for some $b \in M$:
\begin{displaymath}
X \cap M = \{ x \in M : \mathcal{M} \models \phi(x, b) \}
\end{displaymath} Because $C$ is cofinal in $\mathcal{M}$, there is some $a \in C$ such that $b < a$. Consider the set 
\begin{displaymath}
Y = \{ z \in N : (\mathcal{N}, X) \models \exists y < a \: \forall x < z \: (\phi(x, y) \leftrightarrow x \in X) \}
\end{displaymath}

This set contains $\mathcal{M}$. It must also be bounded, since, if it were not, then $Y = N$, and there would be some $b < a$ such that
\begin{displaymath} X = \{ x \in N : \mathcal{N} \models \phi(x, b) \}
\end{displaymath}
However, since $X$ is undefinable, there can be no such $b$. Let $b$ be the maximum of $Y$. Clearly $b$ is a definable element in $(\mathcal{N}, X)$ using only parameters from $C$, and is above $\mathcal{M}$.
\end{proof}

\begin{thm}\label{cuts}
Suppose $\mathcal{M}$ is countable, has no proper cofinal submodel, and is a conservative extension of each of its elementary cuts. Then $\mathcal{M}$ is Enayat.
\end{thm}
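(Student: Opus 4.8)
The plan is to fix an arbitrary undefinable class $X$ of $\mathcal{M}$ and show directly that the expansion $(\mathcal{M},X)$ is pointwise definable. Let $\mathcal{D} := \mathrm{dcl}^{(\mathcal{M},X)}(\emptyset)$, the Skolem hull of $\emptyset$ computed in the expanded structure. Since $\PA^*$ has definable Skolem functions, $\mathcal{D}$ is the universe of an elementary substructure with $(\mathcal{D},X\cap D)\prec(\mathcal{M},X)$, and in particular $\mathcal{D}\prec\mathcal{M}$; moreover, because $\mathrm{dcl}$ is an idempotent closure operator, $\mathrm{dcl}^{(\mathcal{M},X)}(D)=D$. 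As $(\mathcal{M},X)$ is pointwise definable exactly when $D=M$, it suffices to prove $\mathcal{D}=\mathcal{M}$.

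I would now split on whether $\mathcal{D}$ is cofinal in $\mathcal{M}$. If it is, then $\mathcal{D}$ is a cofinal submodel of $\mathcal{M}$, so the hypothesis that $\mathcal{M}$ has no proper cofinal submodel forces $\mathcal{D}=\mathcal{M}$, as wanted. So suppose, towards a contradiction, that $\mathcal{D}$ is bounded in $\mathcal{M}$, say below $m\in M$. Applying Gaifman's Splitting Theorem \cite{gaifmansplit} to $\mathcal{D}\prec\mathcal{M}$ produces $\mathcal{K}$ with $\mathcal{D}\preccurlyeq_\text{cof}\mathcal{K}\preccurlyeq_\text{end}\mathcal{M}$. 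Since every element of $K$ lies at or below an element of $D$, and hence below $m$, the cut $\mathcal{K}$ is also bounded by $m$; thus $\mathcal{K}$ is a \emph{proper} elementary cut of $\mathcal{M}$, so by hypothesis $\mathcal{K}\prec_\text{cons}\mathcal{M}$. Also $D$ is a cofinal subset of $\mathcal{K}$. Now invoke Lemma \ref{limit} with $\mathcal{M}$ (the model of the theorem) playing the role of ``$\mathcal{N}$'', with $\mathcal{K}$ playing the role of ``$\mathcal{M}$'', and with $C:=D$: it yields $b\in M\setminus K$ with $b\in\mathrm{dcl}^{(\mathcal{M},X)}(D)$. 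But $\mathrm{dcl}^{(\mathcal{M},X)}(D)=D\subseteq K$, so $b\in K$, a contradiction. Hence $\mathcal{D}$ is cofinal, so $\mathcal{D}=\mathcal{M}$ and $(\mathcal{M},X)$ is pointwise definable; as $X$ was arbitrary, and as $\mathcal{M}$ is not prime (a prime model being already pointwise definable, in which case the claim is anyway immediate), $\mathcal{M}$ is Enayat.

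The step I expect to require the most care is the bounded case: one must check that the Gaifman-split cut $\mathcal{K}$ is genuinely proper — this is precisely where boundedness of $\mathcal{D}$ is used — and that $D$ remains cofinal in $\mathcal{K}$, so that Lemma \ref{limit}, which does the real work, applies to $\mathcal{K}\prec_\text{cons}\mathcal{M}$ with parameter set $D$; granting that, the contradiction falls straight out of the idempotence of $\mathrm{dcl}$ in $(\mathcal{M},X)$. Each structural hypothesis is used exactly once — ``no proper cofinal submodel'' in the cofinal case, ``conservative over all elementary cuts'' in the bounded case — and countability enters only so that the conclusion matches the definition of an Enayat model.
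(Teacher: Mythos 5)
Your proof is correct and follows essentially the same route as the paper: the paper also takes $C=\mathrm{dcl}^{(\mathcal{M},X)}(\emptyset)$, forms the cut $\mathcal{K}=\sup(C)$ (your Gaifman-split cut), rules out the bounded case via conservativity and Lemma \ref{limit}, and then invokes the no-proper-cofinal-submodel hypothesis. You merely make explicit the details the paper leaves implicit (Gaifman splitting, cofinality of $D$ in $\mathcal{K}$, and idempotence of $\mathrm{dcl}$).
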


\begin{proof}
Let $X \subseteq M$ be an undefinable class and let $C$ be the set of all elements definable in $(\mathcal{M}, X)$. Let $\mathcal{K} = \mathrm{sup}(C)$. Then $\mathcal{K} \prec_\text{cons} \mathcal{M}$. If $C$ is bounded in $\mathcal{M}$, there is $c \in M \setminus K$ definable in $(\mathcal{M}, X)$, which is a contradiction; therefore $C$ must be cofinal in $\mathcal{M}$. Since $\mathcal{M}$ has no proper cofinal submodels, $C = M$.
\end{proof}

We can find many examples of Enayat models as a result of this theorem. As mentioned before, Corollary 2.2.12 of \cite{tsmopa} states that every countable model of $\PA$ has a superminimal conservative extension. This means we form countable elementary chains of superminimal conservative extensions, which, by Theorem \ref{cuts}, are Enayat models. That is, if $\alpha$ is a countable ordinal, $\mathcal{N} = \bigcup\limits_{\beta < \alpha} \mathcal{M}_\beta$, where $\mathcal{M}_0$ is prime, $\mathcal{M}_{\beta + 1}$ is a superminimal conservative extension of $\mathcal{M}_\beta$, and $\mathcal{M}_\lambda = \bigcup\limits_{\beta < \lambda} \mathcal{M}_\beta$ whenever $\lambda$ is a limit ordinal, then $\mathcal{N}$ is Enayat. 

Corollary \ref{enayatlattices} characterized the finite lattices which can be the substructure lattices of an Enayat model. For infinite lattices, we do not have a complete characterization; however, we note the following corollary of Theorem \ref{cuts}.

\begin{cor}\label{countablelattices}
Let $T \neq \TA$ be a completion of $\PA$, $\mathcal{M}_T \models T$ a prime model of $T$, and $L$ a lattice. Suppose there is a countable $\mathcal{N} \succ_\text{cof} \mathcal{M}_T$ such that $\Lt(\mathcal{N}) \cong L$. Then there is an Enayat model $\mathcal{M} \models T$ such that $\Lt(\mathcal{M}) \cong L \oplus \mathbf{2}$.
\end{cor}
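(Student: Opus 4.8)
The plan is to take $\mathcal{M}$ to be a superminimal conservative extension of $\mathcal{N}$ — one exists by \cite[Corollary 2.2.12]{tsmopa} — and to check that $\mathcal{M}$ is Enayat by invoking Theorem \ref{cuts}. Since $\mathcal{N}$ is countable so is $\mathcal{M}$, and since a superminimal extension of $\mathcal{N}$ has substructure lattice $\Lt(\mathcal{N}) \oplus \mathbf{2}$ with $\Lt(\mathcal{N})$ sitting inside $\Lt(\mathcal{M})$ in the obvious way (elementary submodels of $\mathcal{N}$ are elementary submodels of $\mathcal{M}$), we get $\Lt(\mathcal{M}) \cong L \oplus \mathbf{2}$; moreover the proper elementary submodels of $\mathcal{M}$ are exactly the elementary submodels of $\mathcal{N}$, so in particular $\mathcal{M}$ is not prime. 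It then remains to check the two substantive hypotheses of Theorem \ref{cuts}: that $\mathcal{M}$ has no proper cofinal submodel, and that $\mathcal{M}$ is a conservative extension of each of its elementary cuts.

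The crucial observation is that $\mathcal{N}$ has no proper elementary cut. Since $\mathcal{M}_T$ is pointwise definable, it is contained in every elementary submodel of $\mathcal{N}$; since $\mathcal{M}_T$ is cofinal in $\mathcal{N}$, every elementary submodel of $\mathcal{N}$ is cofinal in $\mathcal{N}$; and a downward-closed, cofinal submodel of $\mathcal{N}$ must be all of $\mathcal{N}$. I will also use the standard fact that a conservative extension is necessarily an end extension: if some element $b$ of the larger model lay below an element of the smaller model $\mathcal{K} \models \PA$, then $\{x \in K : x < b\}$ would be a nonempty proper cut of $\mathcal{K}$ which is the trace of a set definable in the larger model, contradicting conservativity together with overspill. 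In particular $\mathcal{N} \prec_\text{end} \mathcal{M}$, and since this extension is proper there is some $c \in M$ lying above all of $N$.

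Given this, the two hypotheses fall out. Every proper elementary submodel $\mathcal{K}$ of $\mathcal{M}$ lies inside $\mathcal{N}$, hence is bounded by $c$, hence is not cofinal in $\mathcal{M}$; so $\mathcal{M}$ has no proper cofinal submodel. For the cuts: an elementary cut of $\mathcal{M}$ is either $\mathcal{M}$ itself or a proper submodel of $\mathcal{M}$, and in the latter case it is an elementary submodel of $\mathcal{N}$ that is downward closed in $\mathcal{M}$, hence downward closed in $\mathcal{N}$, hence — by the second paragraph — equal to $\mathcal{N}$. So the only elementary cuts of $\mathcal{M}$ are $\mathcal{N}$ and $\mathcal{M}$, and $\mathcal{M}$ is a conservative extension of each: of $\mathcal{M}$ trivially, of $\mathcal{N}$ by construction. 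Theorem \ref{cuts} then gives that $\mathcal{M}$ is Enayat.

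I expect the only step with any real content to be the observation that $\mathcal{N}$ has no proper elementary cut: this is what collapses the a priori awkward requirement of controlling conservativity over all of the (possibly infinitely many) cuts living inside $L$ down to the single trivial cut $\mathcal{N}$, and it is exactly where the hypothesis that $\mathcal{N}$ is a cofinal extension of the prime model is used. Everything else is routine bookkeeping with superminimal and conservative extensions.
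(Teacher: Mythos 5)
Your proposal is correct and follows exactly the paper's route: take a superminimal conservative extension $\mathcal{M}$ of $\mathcal{N}$, observe that $\mathcal{N}$ is the only proper elementary cut of $\mathcal{M}$ and that $\mathcal{M}$ has no proper cofinal submodel, and apply Theorem \ref{cuts}. The paper states these two facts without justification, so your verification (that every elementary submodel of $\mathcal{N}$ contains the cofinal pointwise-definable $\mathcal{M}_T$ and hence $\mathcal{N}$ has no proper elementary cuts, plus the fact that conservative extensions are end extensions) simply fills in the details the paper leaves implicit.
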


\begin{proof}
Let $\mathcal{M}$ be a superminimal conservative extension of $\mathcal{N}$. $\mathcal{N}$ is the only proper elementary cut of $\mathcal{M}$ and $\mathcal{M}$ has no proper cofinal submodels. By Theorem \ref{cuts}, $\mathcal{M}$ is Enayat.
\end{proof}

Many examples of lattices can be realized as substructure lattices of Enayat models in this way. Paris \cite{parisdistributive} proved if $L$ is a countable algebraic distributive lattice, then for any completion $T$ of $\PA$, there is $\mathcal{M} \models T$ with $\Lt(\mathcal{M}) \cong L$. This proof can be modified (see \cite[Theorem 4.7.3]{tsmopa}) to obtain the following: if $L$ is a countable algebraic distributive lattice, then every countable nonstandard $\mathcal{M} \models \PA$ has a cofinal extension $\mathcal{N}$ such that $\Lt(\mathcal{N} / \mathcal{M}) \cong L$.

\section{Linearly Ordered Substructure Lattices}\label{mainsect}
In this section, we construct Enayat models which are conservative extensions of all of their submodels; the substructure lattices of these models are linear orders. We show a more general statement: if $\gamma$ is a linear order such that there is some model of $\PA$ whose substructure lattice is isomorphic to $\gamma$, then there is a model $\mathcal{M} \models \PA$ whose substructure lattice is isomorphic to $\gamma$ with the property that for each $\mathcal{K} \in \Lt(\mathcal{M})$, $\mathcal{M}$ is a conservative elementary extension of $\mathcal{K}$. If, in addition, $\gamma$ is countable, then such an $\mathcal{M}$ is Enayat.

\begin{thm}\label{mainthm}
Let $T$ be a completion of $\PA$ and let $\gamma$ be an $\aleph_1$-algebraic algebraic linear order (that is, $\gamma$ is complete, compactly generated, and has countably many compact elements). There is $\mathcal{M} \models T$ such that $\Lt(\mathcal{M}) \cong \gamma$ and, for each $\mathcal{K} \in \Lt(\mathcal{M})$, $\mathcal{K} \prec_\text{cons} \mathcal{M}$.
\end{thm}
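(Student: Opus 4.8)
The plan is to build $\mathcal{M}$ directly over a prime model $\mathcal{M}_T\models T$ so that the finitely generated elementary submodels of $\mathcal{M}$, ordered by inclusion, form a copy of the poset $C(\gamma)$ of compact elements of $\gamma$ (which is countable by hypothesis). Since $\Lt(\mathcal{M})$ is always $\aleph_1$-algebraic, it is then the ideal completion of $C(\gamma)$, namely $\gamma$; and conservativity of the submodels will be forced by the following lemma, in the spirit of \cite[Theorem~2.2.13]{tsmopa}. If $(\mathcal{M}_\alpha)_{\alpha\le\delta}$ is a continuous elementary chain (so $\mathcal{M}_\lambda=\bigcup_{\alpha<\lambda}\mathcal{M}_\alpha$ at limits) with $\mathcal{M}_\alpha\prec_{\text{cons}}\mathcal{M}_{\alpha+1}$ at every successor step, then $\mathcal{M}_\alpha\prec_{\text{cons}}\mathcal{M}_\beta$ for all $\alpha\le\beta\le\delta$; this follows by induction on $\beta$, using that conservativity is transitive (so successor steps compose) and that at a limit $\beta$ any $X\in\Def(\mathcal{M}_\beta)$ is defined by a formula whose parameters appear at some stage $\beta'$ with $\alpha\le\beta'<\beta$, so that $X\cap M_{\beta'}\in\Def(\mathcal{M}_{\beta'})$ by elementarity and $X\cap M_\alpha\in\Def(\mathcal{M}_\alpha)$ by the inductive hypothesis at $\beta'$. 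It therefore suffices to build $\mathcal{M}$ with $\Lt(\mathcal{M})\cong\gamma$ such that every $\mathcal{K}\in\Lt(\mathcal{M})$ either lies on such a chain running up to $\mathcal{M}$ through conservative steps, or lies conservatively below one that does.

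For the construction I would realize each compact $c\in\gamma$ by a finitely generated $\mathcal{M}_c\prec\mathcal{M}$, recursively. If $c$ covers a compact $c^-$, let $\mathcal{M}_c$ be a superminimal conservative end extension of $\mathcal{M}_{c^-}$; such extensions exist by \cite[Corollary~2.2.12]{tsmopa}, and choosing them to be end extensions (via an unbounded minimal type) is what will leave $\mathcal{M}$ without a proper cofinal submodel. If $c$ covers a non-compact $e$, first set $\mathcal{M}_e=\bigcup\{\mathcal{M}_{c'}:c'<e\text{ compact}\}$ and let $\mathcal{M}_c$ be a superminimal conservative end extension of $\mathcal{M}_e$. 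Finally put $\mathcal{M}=\bigcup_c\mathcal{M}_c$. Since superminimal extensions introduce no intermediate submodel, the finitely generated submodels of $\mathcal{M}$ are precisely the $\mathcal{M}_c$, and hence $\Lt(\mathcal{M})\cong\gamma$ as noted above. The delicate point in realizing the order type of $C(\gamma)$ is that when $C(\gamma)$ is not well ordered — it may contain a copy of $\mathbb{Z}$, or be dense like $\mathbb{Q}$ — one cannot recurse along $C(\gamma)$ in increasing order; instead one enumerates $C(\gamma)$ in order type $\omega$ and runs a single simultaneous Henkin-style construction meeting, at successive stages, requirements of the forms ``$\scl(\bar a)=\mathcal{M}_c$'' and ``$X\cap M_c$ is correctly captured'', using the distributive-lattice realization technique of Paris (cf.\ \cite[\S4.7]{tsmopa}; for $T=\TA$ one uses the corresponding types over $\TA$ in place of cofinal extensions) as the template, re-run so as to keep every extension conservative.

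Conservativity of the submodels then splits into cases, and I expect the last to be the main obstacle. For a compact $\mathcal{K}=\mathcal{M}_c$ it is immediate from the chain lemma applied to a chain of $\mathcal{M}_{c'}$'s running from $\mathcal{M}_c$ up to $\mathcal{M}$ through the conservative steps. For a non-compact $\mathcal{K}=\mathcal{M}_e$ that is covered by a compact $c$, the construction gives $\mathcal{M}_e\prec_{\text{cons}}\mathcal{M}_c$, and $\mathcal{M}_c\prec_{\text{cons}}\mathcal{M}$ by the previous case, so transitivity finishes it. The hard case is a non-compact $\mathcal{K}=\mathcal{M}_e$ that is a two-sided limit in $\gamma$, with no compact immediate neighbour on either side — exactly what a dense $C(\gamma)$ forces. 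Here the chain lemma gives nothing for free, since $\mathcal{M}_e$ is a union of conservatively embedded submodels and conservativity need not pass to such unions; closing this gap is precisely what requires the realization above to be carried out ``conservatively at every stage'', i.e.\ so that for each $\mathcal{M}$-definable $X$ and each such $e$ the restriction $X\cap M_e$ is already defined by a parameter lying in $\mathcal{M}_e$. This coherence requirement is the technical heart of the argument, and presumably the point at which Schmerl's simplification enters. Once it is in hand, every $\mathcal{K}\in\Lt(\mathcal{M})$ is conservatively embedded in $\mathcal{M}$; and if $\gamma$ is countable, the resulting countable $\mathcal{M}$ has no proper cofinal submodel and is a conservative extension of each of its elementary cuts, so Theorem~\ref{cuts} shows it is Enayat.
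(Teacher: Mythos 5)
Your reduction to a chain of superminimal conservative extensions is sound as far as it goes: the chain lemma you state (conservativity passes through continuous ordinal-indexed chains with conservative successor steps) is correct, and your construction does prove the theorem when the compact elements of $\gamma$ are well-ordered --- this is essentially the observation, already made after Theorem \ref{cuts}, that unions of countable elementary chains of superminimal conservative extensions are Enayat. But for general $\gamma$ the proposal has a genuine gap, and it is exactly the one you flag yourself. When $C(\gamma)$ is dense (say $\mathbb{Q}$), the recursion ``let $\mathcal{M}_c$ be a superminimal conservative extension of $\mathcal{M}_{c^-}$, respectively of $\mathcal{M}_e=\bigcup\{\mathcal{M}_{c'}:c'<e\}$'' is not well-founded: no compact element covers another, and the non-compact $e$ covered by $c$ is itself defined as the union of all smaller stages, so nothing can be built first. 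Re-enumerating $C(\gamma)$ in order type $\omega$ and invoking a ``Henkin-style construction \dots\ re-run so as to keep every extension conservative'' does not supply the missing mechanism: after such a reshuffling the individual steps of the construction are no longer conservative extensions in the order of $\gamma$, and conservativity of $\mathcal{M}$ over a two-sided-limit cut $\mathcal{M}_e$ does not follow from conservativity over the $\mathcal{M}_{c'}$ approximating it from below (a union of definable traces need not be definable). Calling this ``the technical heart of the argument'' is accurate, but it means the proposal stops precisely where the proof has to begin.

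The paper closes this gap by abandoning the stage-by-stage extension picture entirely and constructing a single complete type in variables $x_0,x_1,\ldots$, reordered at each finite stage by permutations $\pi_n$ so as to match the order of the compact elements $s_0,s_1,\ldots$ of $\gamma$. The key notion is that of a big formula \emph{handling} a Skolem term $t(u,\bar x)$: on a cobounded set of tuples satisfying the formula, the value of $t(u,\bar x)$ depends exactly on an initial segment $x_0,\ldots,x_{i-1}$ of the reordered variables. Lemma \ref{handles} shows every term can be handled by shrinking a big formula (this is where superminimal conservative extensions are used --- as a tool inside the prime model to define the refinement, not as the skeleton of $\mathcal{M}$). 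Realizing the resulting type by $c_0,c_1,\ldots$ and letting $\mathcal{M}=\scl(\{c_i\})$ yields $\Lt(\mathcal{M})\cong\gamma$, and handling gives, for every definable $X\subseteq M$ and every $\mathcal{K}\in\Lt(\mathcal{M})$ simultaneously, a uniform $\cee^*$-definition of $X\cap K$ whose parameters are just those $c_k$ already lying in $\mathcal{K}$. That uniformity is precisely the ``coherence requirement'' your proposal identifies but does not establish; without an argument for it, the proposal is not a proof of the theorem for linear orders that are not well-ordered.
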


\begin{proof}
The \emph{cofinality} quantifier $\cee$ is defined so that $\cee x$ is shorthand for $\forall w \exists x > w$. It is understood that the variable $w$ does not appear elsewhere. The \emph{cobounded} quantifier $\cee^*$ is the dual of $\cee$; that is, $\cee^*$ is $\lnot \cee \lnot$. It can be thought of as shorthand for $\exists w \forall x > w$.

We extend $\cee$ and $\cee^*$ to apply to $n$-tuples $\bar{x} = x_0, x_1, \ldots, x_{n-1}$ so that $\cee \bar{x}$ is $\cee x_0 \cee x_1 \ldots \cee x_{n-1}$, and similarly for $\cee^*$. We note that the order is important here.

\begin{defn}\label{big}
If $1 \leq n < \omega$, an $n$-ary formula $\theta(x_0, x_1, \ldots, x_{n-1})$ is \textbf{big} if $T \vdash \cee \bar{x} \theta(\bar{x})$.
\end{defn}

The $1$-ary formula $x = x$ is big. The following lemma is a simple observation which allows us to extend big $n$-ary formulas to big $(n+1)$-ary formulas.

\begin{lem}\label{addone}
If $\theta(x_0, x_1, \ldots, x_{n-1})$ is a big $n$-ary formula and $i < n$, and $x^\prime$ is a new free variable, the formula $\theta(x_0, x_1, \ldots, x_i, x^\prime, x_{i+1}, \ldots, x_{n-1})$ is a big $(n+1)$-ary formula.
\end{lem}
\qed

\begin{defn}\label{hndl}
Suppose $1 \leq n < \omega$, $t(u, \bar{x})$ is an $(n+1)$-ary Skolem term and $\theta(\bar{x})$ is an $n$-ary formula. We say that $\theta(\bar{x})$ \textbf{handles} $t(\bar{x}, u)$ if:
\begin{displaymath}
\forall u \bigvee\limits_{i \leq n} \cee^* \bar{x} \cee^* \bar{y} [(\theta(\bar{x}) \wedge \theta(\bar{y})) \rightarrow (t(u, \bar{x}) = t(u, \bar{y}) \leftrightarrow \bigwedge\limits_{j < i} x_j = y_j )]
\end{displaymath}
\end{defn}

The following lemma states that every Skolem term can be handled.

\begin{lem}\label{handles}
If $\theta(\bar{x})$ is a big $n$-ary formula and $t(u, \bar{x})$ is an $(n+1)$-ary Skolem term, then there is a big $n$-ary formula $\theta^\prime(\bar{x})$ such that $T \vdash \forall \bar{x} [\theta^\prime(\bar{x}) \rightarrow \theta(\bar{x})]$ and $\theta^\prime(\bar{x})$ handles $t(u, \bar{x})$.
\end{lem}
\begin{proof}
We prove the lemma by induction on $n$. First suppose $n = 1$. Let $\mathcal{M}_0$ be the prime model of $T$ and $\mathcal{M}_0 \prec \mathcal{M}_1$ a superminimal conservative extension. Since $\theta(x)$ is big, there must be $c_0 \in M_1 \setminus M_0$ such that $\mathcal{M}_1 \models \theta(c_0)$.

Let $F = \{ \langle u, m \rangle : \mathcal{M}_2 \models t(u, c_0) = m \}$. By conservativity, $F \cap M_0 \in \Def(\mathcal{M}_0)$, so there is a partial $\mathcal{M}_0$-definable function $f$ such that $f(u) = m$ if and only if $\langle u, m \rangle \in F$ for all $u, m \in M_0$. Let $D$ be the domain of $f$, and let $X = \{ x : \mathcal{M}_0 \models \theta(x) \wedge \forall u \in D (t(u, x) = f(u)) \}$. This $X$ must be unbounded.

Enumerate $M_0 \setminus D$ as $u_0, u_1, \ldots$ (we can assume this set is infinite; if it is finite, the argument is similar). Let $x_0$ be the least $x \in X$. Given $x_0, \ldots, x_i$, let $x_{i+1}$ be the least $x \in X_0$ such that $x > x_i$ and

\begin{displaymath}
\forall j, k \leq i (t(u_j, x_k) \neq t(u_j, x)).
\end{displaymath}

Let $\theta^\prime(x)$ define the set $\{ x_i : i \in M_0 \}$. Then $\theta^\prime$ is big, since the set is unbounded, and handles the term $t(u, x)$.

Let $n > 0$ and assume that the lemma holds for $(n-1)$-ary big $\theta(\bar{x})$ and $n$-ary Skolem terms $t(u, \bar{x})$. Let $\theta(\bar{x})$ be a big $n$-ary formula and $t(u, \bar{x})$ an $(n+1)$-ary Skolem term. We again let $\mathcal{M}_0$ be the prime model of $T$ and let $\mathcal{M}_0 \prec \mathcal{M}_1 \prec \ldots \prec \mathcal{M}_n$ be a chain of superminimal, conservative extensions. Since $\theta(\bar{x})$ is big, there are $c_i \in M_{i+1} \setminus M_i$ such that $\mathcal{M}_n \models \theta(\bar{c})$.

We again let $F = \{ \langle u, m \rangle : \mathcal{M}_n \models t(u, \bar{c}) = m \}$, and by conservativity there is a partial $\mathcal{M}_0$-definable function $f(u, x_0, \ldots, x_{n-2})$ such that 
\begin{displaymath}
\mathcal{M}_{n-1} \models \forall u, m (f(u, c_0, \ldots, c_{n-2}) = m \iff \langle u, m \rangle \in F).
\end{displaymath}
Let $D = \{ u : \mathcal{M}_{n-1} \models \exists m (\langle u, m \rangle \in F) \}$, and by conservativity, $D \cap M_0$ is definable without parameters. We again call this set $D$.

By induction, we get that there is a big $n-1$-ary formula $\theta_0$ that handles $f$. Since $\scl(c_{n-1}) = \mathcal{M}_n$, we also have a Skolem term $g$ such that $\mathcal{M}_n \models g(c_{n-1}) = \langle c_0, c_1, \ldots, c_{n-2} \rangle$. Given $\bar{x}$ an $n-1$-tuple, we let $Y_{\bar{x}} = \{ x : \mathcal{M}_0 \models g(x) = \bar{x} \}$.

Similar to the above proof, we enumerate $M_0 \setminus D$ as $u_0, u_1, \ldots$ and we will construct a sequence $y_0, y_1, \ldots$ as follows. Enumerate those $(n-1)$-tuples $\bar{x}$ such that $\mathcal{M}_0 \models \theta_0(\bar{x})$ as $\bar{x}_0, \bar{x}_1, \ldots$, so that each such $\bar{x}$ appears infinitely often. Let $y_0$ be the least element of $Y_{\bar{x}_0}$. Given $y_0, \ldots, y_i$, let $y_{i+1}$ be the least $y > y_i$ such that $y \in Y_{\bar{x}_{i+1}}$ and $\forall j \leq i, k \leq i (t(u_j, \bar{x}_k, y_k) \neq t(u_j, \bar{x}_{i+1}, y))$. Let $X = \{ y_i : i \in M_0 \}$ and let $\theta^\prime(x_0, \ldots, x_{n-1})$ be the formula
\begin{displaymath}
\theta_0(x_0, \ldots, x_{n-2}) \wedge x_{n-1} \in X \wedge g(x_{n-1}) = \langle x_0, \ldots, x_{n-2} \rangle.
\end{displaymath}
Then $\theta^\prime$ is big and handles $t$.
\end{proof}

Let $\mathcal{M}_0 \models T$ be the prime model. Let $t_0(u, \bar{x}), t_1(u, \bar{x}), \ldots$ be an enumeration of all Skolem terms so that each $t_n$ has at most $(n+1)$ free variables. Let $s_0, s_1, \ldots$ be an enumeration of the (countably many) compact elements of $\gamma$. Given $n < \omega$, let $\pi_n$ be the permutation of $\{ 0, \ldots, n \}$ such that $s_{\pi_n(0)} < s_{\pi_n(1)} < \ldots < s_{\pi_n(n)}$. For an $(n+1)$-ary formula $\theta(\bar{x})$, by $\theta(\pi_n(\bar{x}))$ we mean $\theta(x_{\pi_n(0)}, \ldots, x_{\pi_n(n)})$. Similarly, for an $(n+2)$-ary Skolem term $t(u, \bar{x})$, by $t(u, \pi_n(\bar{x}))$ we mean $t(u, x_{\pi_n(0)}, \ldots, x_{\pi_n(n)})$.

Using Lemmas \ref{addone} and \ref{handles}, we construct a sequence of formulas $\theta_0(x_0), \theta_1(x_0, x_1) \ldots$ such that, for each $n < \omega$:
\begin{itemize}
\item $\theta_n(\pi_n(\bar{x}))$ is big and
\begin{displaymath}
T \vdash \forall \bar{x} (\theta_{n+1}(\bar{x}) \rightarrow \theta_n(\bar{x})),
\end{displaymath}
\item If $t_n$ is $(m+1)$-ary, then there is an $m$-ary formula $\theta(\bar{x})$ such that $\theta(\pi_{m-1}(\bar{x}))$ handles $t_n(u, \pi_{m-1}(\bar{x}))$ and
\begin{displaymath}
T \vdash \forall \bar{x} (\theta_n(\bar{x}) \rightarrow \theta(\bar{x}))
\end{displaymath}
\end{itemize}

The set $\{ \theta_n(\bar{x}) : n \in \omega \}$ determines a complete, consistent type: if $\theta(u, \bar{x})$ is any formula, then the corresponding Skolem term $t(u, \bar{x})$ (defined as $t(u, \bar{x}) = 0$ iff $\theta(u, \bar{x})$ and $t(u, \bar{x}) = 1$ otherwise) is handled at some stage $n$. Let $c_0, c_1, \ldots$ be elements realizing this type and let $\mathcal{M} \models T$ be generated by these elements. We claim that $\mathcal{M}$ is as desired.

First, we show that $\Lt(\mathcal{M}) \cong \gamma$. Let $s_i$ and $s_j$ be compact elements of $\gamma$. We show that $s_i \leq s_j \iff \scl(c_i) \preceq \scl(c_j)$. Suppose $s_i \leq s_j$. Let $m$ be the maximum of $i$ and $j$ and let $t(u, x_0, \ldots, x_m)$ be the term defined as $t(u, x_0, \ldots, x_m) = x_u$ if $u \leq m$ and $t(u, x_0, \ldots, x_m) = 0$ otherwise. There is $\theta(x_0, \ldots, x_m)$ such that $\theta(\pi_m(\bar{x}))$ handles $t(u, \pi_m(\bar{x}))$. Let $X$ be the set defined by $\theta$; that is,
\begin{displaymath}
X = \{ \langle x_0, \ldots x_m \rangle : \mathcal{M}_0 \models \theta(x_0, \ldots, x_m) \}.
\end{displaymath}
Then, for large enough $x$, if there are $\langle x_0, \ldots, x_{m-1} \rangle, \langle y_0, \ldots, y_{m-1} \rangle$ such that $\langle x_0, \ldots, x_{m-1}, x \rangle \in X$ and $\langle y_0, \ldots, y_{m-1}, x \rangle \in X$, then $x_k = y_k$ for each $k \leq m - 1$. Let $t_k(x)$ be the Skolem term defined (on all such large enough $x$) as that unique such $x_k$. Then, if $k$ is such that $\pi_m(k) = i$, $\mathcal{M} \models t_k(c_j) = c_i$.

Conversely, suppose $\scl(c_i) \preceq \scl(c_j)$. There is a term $f$ such that $\scl(c_j) \models f(c_j) = c_i$. Let $n$ be the maximum of $i$ and $j$. We claim that if $k_i$ and $k_j$ are such that $\pi_n(k_i) = i$ and $\pi_n(k_j) = j$, then $k_i \leq k_j$. To see this, let $t(u, \bar{x})= f(x_{k_j})$, and then suppose $\theta(\pi_n(\bar{x}))$ is big and handles $t(u, \pi_n(\bar{x}))$. Then if $k_j < k_i$, if we fix $x_0, \ldots, x_{k_j}$, there would be infinitely many different $x_{k_i}$ such that $f(x_{k_j}) = x_{k_i}$, which is impossible.

Next, we show that if $b \in M$, then there is some $c_i$ such that $\scl(b) = \scl(c_i)$. Let $n < \omega$, $m \in \mathcal{M}_0$, and $t(u, x_0, \ldots, x_{n-1})$ be such that 
\begin{displaymath}
\mathcal{M} \models t(m, \pi_{n-1}(\bar{c})) = b.
\end{displaymath}
Then there is some $\theta(\bar{x})$ such that $\theta(\pi_{n-1}(\bar{x}))$ handles $t(u, \pi_{n-1}(\bar{x}))$. Let $i \leq n$ be such that, in $\mathcal{M}_0$, the following statement holds:
\begin{align*}
\mathcal{M}_0 \models \cee^* \bar{x} \cee^* \bar{y} [\theta(\pi_{n-1}(\bar{x})) \wedge \theta(\pi_{n-1}(\bar{y})) \rightarrow (t(m, \pi_{n-1}(\bar{x})) = t(m, \pi_{n-1}(\bar{y})) \leftrightarrow \\ \bigwedge\limits_{j \leq i} x_{\pi_{n-1}(j)} = y_{\pi_{n-1}(j)})].
\end{align*}
Therefore, there are Skolem functions $f$ and $g$ so that $\mathcal{M} \models f(c_{\pi_{n-1}(0)}, \ldots, c_{\pi_{n-1}(i)}) = b$ and $\mathcal{M} \models g(b) = \langle c_{\pi_{n-1}(0)}, \ldots, c_{\pi_{n-1}(i)} \rangle$. Combining this with the argument above, we have that $\scl(b) = \scl(c_{\pi_{n-1}(i)})$.

This means that all the finitely generated substructures of $\mathcal{M}$ are the $\scl(c_i)$ for each $i < \omega$ and therefore that $\Lt(\mathcal{M}) \cong \gamma$.

Lastly, we show that $\scl(c_i) \prec_\text{cons} \mathcal{M}$ for each $i < \omega$. Let $X \subseteq M$ be defined as
\begin{displaymath}
X = \{ u : \mathcal{M} \models \phi(u, \pi_{n-1}(\bar{c})) \}.
\end{displaymath}
Then if $c_j$ is such that $c_0, \ldots, c_{n-1} \in \scl(c_j)$, clearly $X \cap \scl(c_j) \in \Def(\scl(c_j))$. Suppose $i$ is such that there are some $k < n$ such that $c_k \not \in \scl(c_i)$. Let $t(u, \bar{x})$ be the Skolem term such that $t(u, \bar{x}) = 0$ iff $\phi(u, \bar{x})$ and $t(u, \bar{x}) = 1$ otherwise. If $\theta(\pi_{n-1}(\bar{x}))$ is big and handles $t(u, \pi_{n-1}(\bar{x}))$, it must be the case that, for each $u$, 
\begin{displaymath}
\cee^* \bar{x} (\theta(\pi_{n-1}(\bar{x})) \rightarrow \phi(u, \pi_{n-1}(\bar{x}))) \vee \cee^* \bar{x} (\theta(\pi_{n-1}(\bar{x})) \rightarrow \lnot \phi(u, \pi_{n-1}(\bar{x})))
\end{displaymath}
Let $\theta$ be such that $\theta(\pi_{n-1}(\bar{x}))$ handles $t(u, \pi_{n-1}(\bar{x}))$, and let $c_{\pi_{n-1}(0)}, \ldots, c_{\pi_{n-1}(j)} \in \scl(c_i)$. Then $u \in X \cap \scl(c_i)$ if and only if
\begin{align*}
\scl(c_i) \models \cee^* x_{j+1} \cdots \cee^* x_{n-1} \: \theta(c_{\pi_{n-1}(0)}, \ldots, c_{\pi_{n-1}(j)}, x_{j+1}, \ldots, x_{n-1}) \\ \wedge \: \phi(u, c_{\pi_{n-1}(0)}, \ldots, c_{\pi_{n-1}(j)}, x_{j+1}, \ldots, x_{n-1}).
\end{align*}
\end{proof}

\begin{cor}
Suppose $\gamma$ is a countable algebraic linear order. Then there is an Enayat $\mathcal{M} \models \PA$ such that $\Lt(\mathcal{M}) \cong \gamma$.
\end{cor}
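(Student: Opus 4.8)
The plan is to read off $\mathcal{M}$ directly from Theorem \ref{mainthm} and then check that it satisfies the hypotheses of Theorem \ref{cuts}. First I would note that a countable algebraic linear order $\gamma$ is automatically $\aleph_1$-algebraic in the sense required by Theorem \ref{mainthm}: it is complete and compactly generated by hypothesis, and since $|\gamma| \leq \aleph_0$ it has only countably many compact elements and certainly fewer than $\aleph_1$ compact predecessors below any point. So, fixing any completion $T$ of $\PA$, Theorem \ref{mainthm} yields $\mathcal{M} \models T$ with $\Lt(\mathcal{M}) \cong \gamma$ such that $\mathcal{K} \prec_\text{cons} \mathcal{M}$ for every $\mathcal{K} \in \Lt(\mathcal{M})$. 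Moreover this $\mathcal{M}$ is countable, being generated by the countable set $\{ c_i : i < \omega \}$ exhibited in that proof.

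Next I would verify that $\mathcal{M}$ has no proper cofinal submodel. Suppose toward a contradiction that $\mathcal{K} \prec_\text{cof} \mathcal{M}$ with $\mathcal{K} \neq \mathcal{M}$, and choose $b \in M \setminus K$; by cofinality there is $a \in K$ with $b < a$. Then $Z = \{ x \in M : x < b \} \in \Def(\mathcal{M})$, but $Z \cap K = \{ x \in K : x < b \}$ is a proper cut of $\mathcal{K}$ — it is downward closed, bounded by $a$, and closed under successor (if $y \in K$ and $y < b$ then $y+1 \in K$, so $y + 1 \neq b$, and $y + 1 > b$ is impossible in a discrete order) — and proper cuts of models of $\PA$ are never definable, by the least number principle. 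This contradicts $\mathcal{K} \prec_\text{cons} \mathcal{M}$. Hence $\mathcal{M}$ has no proper cofinal submodel, and in particular it is a conservative extension of each of its elementary cuts, so both hypotheses of Theorem \ref{cuts} hold and $\mathcal{M}$ is Enayat — provided $\gamma$ has more than one element, so that $\mathcal{M}$ is nonprime (if $\gamma \cong \mathbf{1}$ then the only model with that substructure lattice is the pointwise definable prime model, and the statement must be read as excluding this degenerate case).

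I do not expect a serious obstacle: the corollary is essentially a repackaging of Theorems \ref{mainthm} and \ref{cuts}. The only point needing a genuine (if short) argument is the implication ``conservative over all elementary submodels $\Rightarrow$ no proper cofinal submodel,'' and the only subtlety worth flagging is the trivial lattice $\mathbf{1}$.
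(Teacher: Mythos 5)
Your proposal is correct and matches the paper's intended argument: the corollary is stated with no written proof precisely because it is the immediate combination of Theorem \ref{mainthm} and Theorem \ref{cuts} that you describe. The one step you spell out --- that conservativity over every elementary submodel rules out proper cofinal submodels, via the non-definability of a proper cut --- is the standard fact that conservative extensions are end extensions, which the paper takes as known; your caveat about the trivial order $\mathbf{1}$ (where the resulting model is prime and hence not Enayat by definition) is a fair, if pedantic, observation.
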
\qed

\section{Open Problems}\label{opens}

From Theorem \ref{cuts}, if a model of $\PA$ is countable, has no proper cofinal submodel and is a conservative extension of each of its elementary cuts, then it is Enayat. Additionally, Lemma \ref{cofinality} shows that models with proper cofinal submodels cannot be Enayat. A negative answer to the following problem would complete the classification of Enayat models:

\begin{prob}
Suppose $\mathcal{M} \models \PA$ is countable but is not a conservative extension of (at least) one of its proper elementary cuts. Can $\mathcal{M}$ be Enayat?
\end{prob}

Corollary \ref{enayatlattices} characterizes the finite lattices which can appear as a substructure lattice of an Enayat model. We do not have such a characterization for countable lattices, though Corollary \ref{countablelattices} provides a first attempt.

\begin{prob}
Suppose $L$ is a countable lattice such that there is $\mathcal{M} \models \PA$ with $\Lt(\mathcal{M}) \cong L$. Under what circumstances is there an Enayat model $\mathcal{M}$ such that $\Lt(\mathcal{M}) \cong L$?
\end{prob}

\begingroup\csname @temptokena\endcsname{}
\theendnotes
\endgroup

\bibliographystyle{plain}
\bibliography{library}

\end{document}